\Crefname{section}{\S\!\!}{\S\!\!}
\newcommand{\myKer}{\mathop{\rm ker}}
\newcommand{\myIm}{\mathop{\rm Im}}
\newcommand{\rank}{\mathop{\rm rk}}
\newcommand{\Spec}{\mathop{\rm Spec}}
\numberwithin{equation}{section}
\theoremstyle{plain}
\newtheorem{lemma}[equation]{Lemma}
\newtheorem{thm}[equation]{Theorem}
\theoremstyle{remark}
\newtheorem{rmk}[equation]{Remark}
\newtheorem{example}[equation]{Example}
\theoremstyle{definition}
\newtheorem{definition}[equation]{Definition}
\DeclareSymbolFont{cyrletters}{OT2}{wncyr}{m}{n}
\DeclareMathSymbol{\Sha}{\mathalpha}{cyrletters}{"58}
\newcommand{\F}{\mathbb F}
\newcommand{\PP}{\mathbb P}
\newcommand{\Q}{\mathbb Q}
\newcommand{\Z}{\mathbb Z}
\newcommand{\bbQ}{\mathbb{Q}}
\newcommand{\bbZ}{\mathbb{Z}}
\newcommand{\calO}{\mathcal O}
\newcommand{\Magma}{\textsf{Magma}\/}
\newcommand{\isom}{\simeq}
\newcommand{\tensor}{\otimes} %
\DeclareMathOperator{\Aut}{Aut}
\DeclareMathOperator{\End}{End}
\DeclareMathOperator{\Galois}{Gal}
\DeclareMathOperator{\ord}{ord}
\newcommand{\blair}[1]{{\color{blue} \textsf{[Blair: #1]}}}
\title{Arithmetic Information of Rational Elliptic Surfaces, and Shioda's Rank 68 Surface}
\author{Blair Butler}
\author{Andreas-Stephan Elsenhans}
\date{\today}
\begin{document}
\begin{abstract}
    The field of definition of the Mordell-Weil group of an elliptic surface $E \rightarrow \mathbb{P}^1$ defined over $\Q$ is the smallest number field $k$ such that all of its $\Bar{\Q}(t)$-rational points are defined over $k(t)$. In this paper, we present an algorithm, implemented in \Magma{}, which can determine arithmetic information, including the field of definition, associated to any rational elliptic surface. As an application of this, we also demonstrate that the field of definition of Shioda's rank $68$ elliptic surface given by $y^2 = x^3 + t^{360} + 1$ is a number field of degree $829,440$.
\end{abstract}

\maketitle

\linespread{1.3}

\section{Introduction and Main Results}
Let $k_0$ be a field, and $C$ a smooth projective curve defined over $k_0$. An elliptic surface is a smooth projective surface $S$, which is given with a relatively minimal elliptic fibration $f: S \rightarrow C$. We also require the condition that $f$ admits a section. This is commonly referred to as a Jacobian elliptic surface, though here we will simply call these elliptic surfaces.
These are in $1$-$1$ correspondence with elliptic curves $E$ over a function field $k_0(C)$. 
If we take the base curve $C$ to be $\PP^1$, then our elliptic surface admits a globally minimal Weierstrass equation of the form
\begin{equation}\label{Rational Weiestrass Form}
y^2 + a_1(t)xy + a_3(t)y = x^3 + a_2(t)x^2 + a_4(t)x + a_6(t)     
\end{equation}

where $a_i(t) \in k_0(t)$. An elliptic surface is called a rational elliptic surface if it is birational to $\PP^2$, the projective space of dimension two. In this case we have $\deg a_i(t) \leq i$ for all $i$. The surface is a $K3$ elliptic surface if $\deg a_i(t) \leq 2i$ for all $i$, and there exists at least one $i$ such that $\deg a_i(t) > i$, otherwise we call it an honestly elliptic surface. We call the number field $k_0$ the field of definition for the  elliptic surface. We call the set of sections $E(\overline{\Q}(t))$ the Mordell-Weil group of the elliptic surface. So long as the elliptic surface is non-constant on $\overline{\Q}(t)$, the Mordell-Weil group is a finitely generated abelian group \cite[Theorem III.6.1]{silverman2013advanced}. In addition, since the coefficients involved in the coordinates of generators of $E(\overline{\Q}(t))$ are algebraic, we can determine the smallest number field $k$ which contains them all. We call this $k$ the Field of Definition (of the Mordell-Weil group). In other literature, this is also referred to as the splitting field. 
    In order to determine the field of definition, we can turn to Galois representations. Let $G = \Galois(\overline{k_0}/k_0)$ denote the Galois Group. Then the natural action of $G$ on $E(\overline{k_0}(t))$ preserves the height pairing $\langle \cdot,\cdot\rangle$ and hence it gives a Galois representation $$\rho : G \rightarrow \Aut(E(\overline{\Q}(t))/E(\overline{\Q}(t))_{\mathrm{tor}}, \langle \cdot,\cdot\rangle).$$
    The target group is a finite group since the height pairing is positive definite up to torsion, and so is the image group $\myIm(\rho)$. Now the field of definition $k$ is precisely the Galois extension of $k_0$ corresponding to $\myKer(\rho)$ in the sense of Galois theory, and we have $\Galois(k/k_0) = \myIm(\rho)$. For further details, see \cite[Chapter 9]{schutt2019mordell}.

An important method to determine the rank of an elliptic surface is through the Shioda-Tate formula. This gives a relationship between the independent sections and the Néron–Severi lattice and the configuration of singular fibres:
$$\rank  E(\overline{\mathbb{Q}}(t)) = \rho -2 - \sum_{v\in \mathbb{P}^1}(m_v - 1),$$
where $\rho$ is the Picard number of the elliptic surface $E$.
The singular fibres can be determined explicitly using Tate's algorithm. Determining the rank of the Néron–Severi lattice is a non-trivial task, and can typically only be done in a relatively small class of elliptic surfaces.

The first is the case of rational elliptic surfaces. In this case, the rank of the Néron–Severi lattice is always equal to $10$, and so the rank of a rational elliptic surface can be determined just by the reduction types of the singular fibres. From there, finding the independent generators, as well as the field of definition is a relatively involved algorithm, which has been implemented in \Magma.
\begin{thm}\label{Rational Elliptic SUrface Theorem}
There is an algorithm that determines the rank, splitting field, automorphism group of the field of definition, generating sections, torsion sections, and irreducible representations of any rational elliptic surface defined over a number field $k_0$ in minimal Weierstrass form.
\end{thm}
 Another case is when the elliptic surface is also a Delsarte surface. Such a surface is
    birational to a hypersurface in $\mathbb{P}^3$
 defined by a polynomial with exactly 4
 terms. Elliptic Delsarte surfaces have previously been fully determined to consist of $42$ distinct families, and determining the generators and field of definition of a specific Delsarte elliptic surface is seemingly equivalent to determining the same information for an appropriate selection of rational and $K3$ elliptic surfaces. We focus on the elliptic surface 
 \begin{equation}\label{Shioda Rank 68 Eliptic Surface}
 E_{68}: \quad  y^2 = x^3 + t^{360} + 1.    
 \end{equation}
 This was first given by Shioda, and has rank $68$. Currently, this is the elliptic surface with the highest known rank, although there are other families of elliptic surfaces with rank $68$ as well, though these are $\overline{\mathbb{Q}}$-isomorphic to Shioda's surface \cite[Section 4.1]{heijne2011elliptic}. By Silverman's specialization \cite[Theorem III.11.4]{silverman2013advanced}, we can use this to construct an infinite family of elliptic curves with rank at least $68$, and so there is great interest to determine the field of definition for this surface.
We proceed in a similar way to \cite{chahal1999sections} to decompose our elliptic surface into 11 "smaller" elliptic surfaces, and analyse these separately. Putting all of this together, we will see:
\begin{thm}\label{Rank 68 Surface Theorem}
    The field of definition of the Mordell-Weil group of the rank $68$ elliptic surface $$y^2 = x^3 + t^{360} + 1$$
    is a number field of degree $829,440 = 2^{11}\cdot 3^4 \cdot 5$.
\end{thm}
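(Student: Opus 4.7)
The plan is to adapt the decomposition strategy of Chahal--Meijer--Top~\cite{chahal1999sections} to the Shioda surface $E/\Q(t)$ defined by $y^2 = x^3 + t^{360}+1$, and then to apply Theorem~\ref{Rational Elliptic SUrface Theorem} piecewise to the smaller surfaces that appear.

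The first step is to exploit the Delsarte (4-term) structure of the defining equation to decompose the Mordell--Weil lattice $E(\overline{\Q}(t))$ as an orthogonal direct sum of eleven sublattices. Each summand is the pullback, under a base change of the form $t \mapsto s^{360/n_i}$ (possibly composed with a cubic or quadratic twist), of the \emph{new} part of the Mordell--Weil lattice of an auxiliary elliptic surface $E_{n_i}\colon y^2 = x^3 + s^{n_i}+1$ with $n_i \mid 360$. Each $E_{n_i}$ is either a rational or a K3 elliptic surface; exactly eleven choices of $n_i$ contribute a non-zero new part, and the ranks of those new parts sum to $68$.

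The second step is to compute the field of definition $k_i$ and the Galois representation $\rho_i$ for every $E_{n_i}$, and then to reassemble. For the rational summands, Theorem~\ref{Rational Elliptic SUrface Theorem} provides $k_i$, the image of $\rho_i$, and generators via the \Magma\ implementation; for the K3 and honestly elliptic summands, the same ingredients---Tate's algorithm for singular fibres, explicit section search on the generic fibre, and diagonalization of the height pairing---are applied by hand or with targeted modifications. To pass from the eleven pieces to $E$ itself, pulling back along $t \mapsto s^{360/n_i}$ forces us to adjoin roots of unity (coming from the choice of a $(360/n_i)$-th root $s$ of $t$) together with cube and square roots arising from the twists. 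Taking $k$ to be the compositum of the eleven $k_i$ and this cyclotomic data, the Galois group $\Galois(k/\Q)$ embeds into $\prod_i \Galois(k_i/\Q)$ as the subgroup acting compatibly on the shared cyclotomic subfield, and a direct calculation yields $[k:\Q] = 2^{11}\cdot 3^4 \cdot 5 = 829{,}440$.

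The main obstacle is the bookkeeping in the final step: correctly identifying the intersections among the $k_i$, so that the compositum degree neither overcounts (by ignoring shared cyclotomic subfields) nor undercounts (by missing sections of $E$ that are not visible on any single $E_{n_i}$). A closely related subtlety is verifying that the eleven pullback sublattices actually span the full lattice $E(\overline{\Q}(t))$ rather than only a finite-index sublattice; a non-trivial index would shrink the Galois image and change the final degree. Once both points are controlled, assembling the eleven explicit Galois representations into a single group of order $829{,}440$ is a finite computation that can be verified in \Magma.
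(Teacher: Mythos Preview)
Your plan is essentially the paper's own: invoke the Chahal--Meijer--Top decomposition into eleven pieces, apply the rational-surface algorithm to the small ones, handle the K3 piece separately, and then compute the compositum of the eleven fields of definition. Two points of your sketch drift from what actually happens and are worth correcting.

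First, the eleven auxiliary surfaces are \emph{not} of the form $y^2 = x^3 + s^{n_i}+1$ for various $n_i \mid 360$; that description would make most of them honestly elliptic (any $n_i > 12$), contradicting your own claim that each is rational or K3. The actual CMT list consists of ten rational surfaces of the shape $y^2 = x^3 + t^a(t^b+1)$ with $a+b \le 6$ (these are the ones to which Theorem~\ref{Rational Elliptic SUrface Theorem} applies verbatim) together with the single K3 surface $y^2 = x^3 + t^5 + t^{-5}$ of rank $16$. The twists you allude to are precisely what produces this list, so your ``possibly composed with a cubic or quadratic twist'' is doing all of the work; the paper just quotes the explicit list from~\cite{chahal1999sections}.

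Second, the paper does not attempt the intersection bookkeeping you flag as the main obstacle. Instead it collects the defining polynomials of all eleven fields $k_i$ (plus the fifth roots of unity needed for the K3 piece), feeds them simultaneously to the Galois-group routine of Section~\ref{Section:Steph}, and reads off $|\Galois(k/\Q)| = 829{,}440$ directly. Your concern about the eleven sublattices possibly spanning only a finite-index sublattice is handled in the paper, for the K3 summand, by checking that the resulting Galois representation on the $16$ independent sections is faithful; the compositum computation over $\Q$ then gives the degree without any hand analysis of pairwise intersections.
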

This elliptic surface has been widely explored, for instance \cite{bootsma2023certain} explores this surface over fields of finite characteristic. Here the author gave the field of definition of the elliptic surface in terms of an explicit set of polynomials, and then determined a prime number such that all $68$ sections are defined over $\F_p(t)$. Furthermore, the field of definition of this elliptic surface has also been explored in \cite{salami2025splitting}, who was able to give an explicit description of the field of definition as a compositum of polynomials, but was unable to determine the degree of the number field.

For rational elliptic surfaces over finite fields, \Magma\ functions exist to determine the Mordell-Weil group and generators, and were implemented by Jasper Scholten in $2005$. Extending this implementation was not previously possible due to the difficulty in the construction of the automorphism group, and was not effective for elliptic surfaces whose field of definition have degree higher than say $50$. As a result, this required further algorithms to be developed which allowed us to compute the automorphism groups for number fields of higher degree, and in this article we compute automorphism groups of number fields of degree up to $240$.

All computations were done on a 2024 MacBook Pro, with 16GB RAM, running macOS Tahoe 26.5, using \Magma\ version V2.29-1.
\subsection{Outline of the paper}
In Section \ref{Section:Steph}, we show how to use the Galois group package of \Magma\
to explicitly construct the splitting field and its automorphism group of a polynomial. In Section \ref{Section: Rational}, we prove Theorem \ref{Rational Elliptic SUrface Theorem}, as well as giving an overview and some examples of the algorithm to completely determine all of this arithmetic information associated to a rational elliptic surface in \Magma. Finally, in Section \ref{Section: Rank68} we treat the elliptic surface of rank $68$, which has previously been seen to be able to be decomposed into $11$ ``smaller'' elliptic surfaces, $10$ of which are rational and can be understood using our algorithm, and one $K3$ elliptic surface.

\subsection{Acknowledgments}
The authors would like to thank John Voight for bringing this subject to their attention, and for their support. The first author is supported by a Postgraduate Research Scholarship in Mathematics and Statistics from the University of Sydney, while the second author is supported by a grant from the Simons Foundation (SFI-MPS-Infrastructure-00008650, JV).

\section{Computing Splitting Fields and Their Automorphism Groups} \label{Section:Steph}

\subsection{Using the GaloisGroup package}

Using \Magma\ one can compute the Galois group of a given polynomial $f \in \bbQ[X]$ by calling {\tt GaloisGroup}.
This returns the automorphism group of the splitting field of $f$ as a permutation group of the roots of $f$.
Further, the roots of $f$ are returned as elements in the complex numbers or in a $p$-adic splitting field.

This information is sufficient to build the splitting field of $f$ as a number field. The simplest way to
do this is to call {\tt GaloisSubgroup}. This will construct a resolvent 
corresponding to a subgroup given. If the trivial subgroup is supplied, a resolvent $g$ defining the splitting field is returned.




\subsection{Results of GaloisSubgroup}

We use the notation $r_1,\ldots,r_n$ for the roots of the initial polynomial $f$, $G \subset S_n$
for the Galois group, and $I$ for the invariant $I$ that was used to construct the resolvent $g$. 
Then one root of $g$ is given by 
$$
-I(r_{1},\ldots,r_{n}).
$$
Further, all the roots of $g$ are given by the 
Galois orbit
$$
-I(r_{1^\sigma},\ldots,r_{n^\sigma}) \mbox{ for all } \sigma \in G\, .
$$

\subsection{Automorphisms of the splitting field}

As the resolvent $g$ defines the normal extension $K := \bbQ[X] / (g)$, one can ask for an explicit description
of its automorphism group. In other words, assume that the root $-I(r_1,\ldots,r_n)$
is presented by $\overline{X} \in K$, one has to construct the other roots of $g$ as elements in $K$. 
As the above roots are $p$-adic approximations of the roots of $g$, one could do this by
rational reconstruction. But here we will describe a more efficient approach.

For each $\sigma \in G$, we are looking for a polynomial $h \in \bbQ[X]$ of degree less than $\deg(g)$ with 
$$
h(-I(r_1,  \ldots,r_n)) = -I(r_{1^\sigma},  \ldots,r_{n^\sigma}) \, .
$$
As $h$ has rational coefficients, we get
$$
h(-I(r_{1^\tau},  \ldots,r_{n^\tau})) = -I(r_{1^{\sigma \tau}},  \ldots,r_{n^{\sigma \tau}})
$$
for all $\tau \in G$. The elements $-I(r_{1^\tau},  \ldots,r_{n^\tau}), \tau \in G$ are 
distinct. Thus, the values of the polynomial $h$ are known at more than $\deg(h)$ points
and $h$ can be constructed by interpolation.

This gives $h$ with any $p$-adic precision. If we assume that $r_1,\ldots,r_n$ are algebraic integers
and $I$ has only integer coefficients, the roots of $g$ are algebraic integers as well. Using \cite[Lemma 10.1.20]{Cohen2}, we have
$$
h(\overline{X}) \in
\calO_K \subset \frac{1}{g^\prime(\overline{X})} \bbZ[\overline{X}] \subset \bbQ[X] / g(X) = K \, .
$$
Thus, one can conclude 
$$
h(\overline{X}) \cdot g^\prime(\overline{X}) \in \bbZ[\overline{X}]\, .
$$
This is used to reconstruct the product above and derive $h(\overline{X}) \in \calO_K$ from it.

\subsection{Using Newton lifting}

In practice, we solve the above interpolation problem only in the residue field (i.e., with $p$-adic precision 1)
giving us $h_0 \in \bbZ[X]$.
To raise the precision, we use Newton lifting
$$
h_{n+1} = h_n - \frac{g(h_n(\overline{X}))}{g^\prime(h_n(\overline{X}))} \, .
$$
As the Newton method converges quadratically, $h_n$ has $p$-adic precision $2^n$. 
For the reconstruction, we form symmetric integer representatives of the coefficients of 
$h_n(\overline{X}) \cdot g^\prime(\overline{X})$ and check if this leads to a 
$g^\prime(\overline{X})$-multiple of an exact root of $g$. If not, we increase the precision by working with a larger value of $n$.

The same strategy was used in~\cite{elsenhans2019computing} to construct embeddings of subfields. The interested reader will find a-priori estimates for the required $p$-adic precision in this article as well.

\subsection{The roots of $f$}
For completeness, we describe the computation of the 
roots of $f$ in $K$. This is done in a 
similar way. For each $i = 1,\ldots,n$ we have to determine $h \in \bbQ[X]$ of degree less than $\deg(g)$ with
$$
r_i = h(\overline{X}) = h(-I(r_1,\ldots,r_n)) \,.
$$
Using the Galois action one more time results in 
$$
r_{i^\tau} = h(-I(r_{1^\tau},\ldots,r_{n^\tau})) \mbox{ for all } \tau \in G \, .
$$
Thus, $h$ can be computed by interpolation.
We denote by $h_0$ a solution of the interpolation problem with $p$-adic
precision 1.
The $p$-adic precision of $h_0$ can be increased by
the Newton iteration
$$
h_{n+1} = h_n - \frac{f(h_n(\overline{X}))}{f^\prime(h_n(\overline{X}))} \,.
$$
The rational reconstruction of the coefficients of $h$ is done in the same way as above. 

\subsection{SplitAutGrp Function}

All of this has been combined into the function \emph{SplitAutGrp}, which runs on \Magma{}.

\begin{algorithm}[H]\label{Algorthim Split}
\caption{\bf SplitAutGrp}
\KwIn{ A sequence of polynomials defined over the rational numbers, whose product is squarefree}
\KwOut{\begin{itemize}
    \item Splitting Field
    \item Galois Group
    \item Mapping from the Galois group to the automorphism group
    \item Roots of the initial polynomials
\end{itemize}}
\end{algorithm}
The algorithm computes each output as described in the previous sections, and the code for this can be found at \cite[AutSplit.m]{RatES-Code}

\begin{rmk}
    The algorithm also requires the choice of a prime number. If no prime is given, the algorithm will choose a prime based on the output of \emph{GaloisGroup}, though this may fail, for instance, if this causes the discriminant of the polynomial corresponding to the output of \emph{GaloisSubgroup} to be zero modulo this prime. A good prime $p$ is one where all of the polynomials involved, in both the inputs and the outputs, have distinct roots modulo $p$. If a bad prime error occurs, the user has to rerun the computation with another prime. As the most time consuming steps of the computation are done after the prime is confirmed to be good, only a small proportion of the run time is spent on bad primes.
\end{rmk}

\begin{example}
Consider the polynomials $\{x^3 - x^2 - 3x + 1, x^2-x-1, x^5 - x^4 - 5x^3 + 4x^2 + 3x - 1 \}$. The function \textit{SplitAutGrp} finds the splitting field of these polynomials to be a number field of degree $120$, the Galois Group $C_2 \times S_3 \times D_5$, and takes less than $20$ seconds.
\end{example} 
\section{Rational Elliptic Surfaces} \label{Section: Rational}
For rational elliptic surfaces, it is known exactly what possibilities exist for the Mordell-Weil group:
\begin{thm}\cite[Table 8.2]{schutt2019mordell}
    For a rational elliptic surface, the Mordell-Weil group $E(\overline{\Q}(t))$ can take the form:
\[ \begin{cases} 
      \Z^r & 0 \leq r \leq 8 \\
      \Z^r \oplus \Z/2\Z & 0 \leq r \leq 4\\
      \Z^r \oplus \Z/3\Z & 0 \leq r \leq 2\\
      \Z^r \oplus (\Z/2\Z)^2 & 0 \leq r \leq 2\\
      \Z^r \oplus \Z/4\Z & 0 \leq r \leq 1\\
   \end{cases}
\]
or one of $\{\Z/5\Z, \Z/6\Z, (\Z/3\Z)^2, \Z/4\Z \oplus \Z/2\Z \}$.
\end{thm}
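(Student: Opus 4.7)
The plan is to derive this classification from Shioda's Mordell--Weil lattice theory, reducing the problem to a finite enumeration of root sublattices of the $E_8$ lattice.

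First I would set up the lattice-theoretic framework. For a rational elliptic surface $f : S \to \PP^1$ the Picard number satisfies $\rho(S) = 10$, and the frame lattice $L := \langle O, F \rangle^{\perp} \subseteq NS(S)$, namely the orthogonal complement of the zero section $O$ and a smooth fibre $F$, is isomorphic to the negative-definite $E_8$ root lattice. Let $R \subseteq L$ be the sublattice spanned by the non-identity components of the reducible singular fibres; then $R$ is a negative-definite root lattice with $\rank R = \sum_{v \in \PP^1}(m_v - 1)$. By Shioda's structure theorem there is a natural isomorphism
$$
E(\Bar{\Q}(t)) \iso L/R,
$$
whose free part has rank $8 - \rank R$ and is identified as a lattice with the orthogonal complement $R^{\perp} \subseteq L$, and whose torsion subgroup is identified with the finite quotient $R^{\mathrm{prim}}/R$, where $R^{\mathrm{prim}}$ is the primitive closure of $R$ inside $L$. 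In particular the free rank $r$ always lies in $\{0,1,\dots,8\}$, giving the universal bound on the first row of the table.

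With this reduction the classification becomes a finite lattice computation. Sublattices $R \subseteq E_8$ generated by roots are classified by Borel--de Siebenthal (equivalently Dynkin's list of maximal subroot systems), and for each such $R$ one computes the primitive closure $R^{\mathrm{prim}}$ explicitly inside $E_8$. Reading off the torsion quotient $R^{\mathrm{prim}}/R$ in each case produces only the abelian groups appearing in the theorem, while the inequalities bounding $r$ in each row come from the identity $r = 8 - \rank R$ combined with the minimal rank required inside $R^{\mathrm{prim}}$ to realise a prescribed torsion quotient. For example, producing $\Z/4\Z$ torsion forces a $D_m$-summand with $m \ge 7$ in $R^{\mathrm{prim}}$, which in $E_8$ constrains $\rank R \ge 7$ and therefore $r \le 1$; the remaining torsion cases are handled analogously.

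The main obstacle lies in the converse direction: the lattice enumeration above only provides an upper bound on the isomorphism types that can appear, so one still has to exhibit a rational elliptic surface realising every entry on the list. This is handled by the Oguiso--Shioda classification of the $74$ admissible singular fibre configurations on a rational elliptic surface, each of which is realised by an explicit Weierstrass model whose fibre types are confirmed by Tate's algorithm. Matching the configurations allowed by the lattice computation against this $74$-line table recovers precisely the groups listed in the statement, completing the proof.
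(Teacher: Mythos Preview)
The paper does not actually prove this theorem: it is stated with a citation to Sch\"utt--Shioda's book and no argument is given. So there is no ``paper's own proof'' to compare against; the result is imported wholesale from the literature.

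That said, your outline is essentially the standard proof in the cited reference (ultimately the Oguiso--Shioda classification). The reduction via the frame lattice $E_8$, the identification of torsion with $R^{\mathrm{prim}}/R$, and the appeal to the finite list of root sublattices together with the $74$ realised configurations is exactly how the result is established there. One small slip: your illustrative example for $\Z/4\Z$ is not right. The primitive closure $R^{\mathrm{prim}}$ is in general not a root lattice, so speaking of a ``$D_m$-summand'' of $R^{\mathrm{prim}}$ does not make sense; in the rank~$0$ case $R^{\mathrm{prim}}=E_8$ itself, and the relevant $R$'s are things like $A_7\oplus A_1$ or $A_3^{\oplus 2}\oplus A_1^{\oplus 2}$, with no $D_m$ in sight. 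The bound $r\le 1$ genuinely comes out of the enumeration rather than from the mechanism you describe. This does not affect the validity of the overall strategy, only the heuristic you offered for one case.
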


\begin{proof}[Proof of Theorem \ref{Rational Elliptic SUrface Theorem}]
It is known, for example \cite[Theorem 8.33]{schutt2019mordell}, that for any rational elliptic surface given in the form as in \eqref{Rational Weiestrass Form}, then there are at most $240$ $\overline{\Q}(t)$-integral points, that is, points $P=(x,y)$ of the form \[x= at^2+bt+c, \quad y = dt^3 + et^2 + ft + g, \quad a,\dots,g \in \overline{k_0}.\] Furthermore, these points generate the Mordell-Weil Group $E(\overline{\Q}(t))$. 
Substituting the degree-bounded polynomials into the Weierstrass equation yields a finite system of polynomial equations in the coefficients $a_1, \dots, a_7$. Using this, we get an ideal $I \subset k[a_1,\dots,a_7]$. This ideal $I$ encodes the possible tuples $(a_1,\dots,a_7)$ that yield a valid section. By the Lasker-Noether Theorem, this ideal $I$ admits a decomposition \[I = \mathfrak{q}_1\cap \dots \cap \mathfrak{q}_i\] where each $\mathfrak{q}_i$ is primary, with associated prime $\mathfrak{p}_i =\sqrt{\mathfrak{q}_i}$. Since the degrees of $x$ and $y$ are bounded, the variety $V(I)$ consists of finitely many points, and hence $I$ is zero-dimensional. By Hilbert's Nullstellensatz, each primary component $\mathfrak{q}_i$ corresponds to a distinct family of solutions, of which there are finitely many.
For each primary component $\mathfrak{q}_i$, passing to the associated prime $\mathfrak{p}_i = \sqrt{\mathfrak{q}_i}$ the elimination ideal $\mathfrak{p}_i \cap \Q[a_j] = (f_{j}(a_{j}))$ yields an irreducible polynomial $f_j \in \Q[x]$, which is computable via a lexicographic Gröbner basis. This polynomial is the minimal polynomial of the coefficient $a_j$ over $\Q$, and the field of definition of the corresponding family of sections is $K_i = \Q[x]/(f_j(x))$. Since there are a finite number of these polynomials, and they all have finite degree, their splitting field is a finite extension of $\Q$. We can then recover the sections by computing the roots of each minimal polynomial $f_j$ over $K_i$ and substituting the resulting coefficient values back into the prescribed polynomial form. Since the number of primary components is finite and each yields finitely many solutions, this process terminates with at most $240$ explicit sections. Since we know that the integral points generate the Mordell-Weil group, we can use the standard theory of heights of points to determine the generating points of this set, as in \cite{silverman2009arithmetic} and \cite{silverman2013advanced}, giving the rank, torsion points, and generating sections of the rational elliptic surface. 
Applying each automorphism of the Galois group to the finitely many coefficients of each section is a finite iteration, producing a finite matrix group. Computing the characters of its irreducible representations terminates by standard finite group theory.
This algorithm is explained in more detail in Algorithm \ref{alg:CompCG}.
\end{proof}

A significant bottleneck in the running time is in the height pairing needed to determine independent points of the set of integral points. Instead of working over number fields, we prefer to work over fields of finite characteristic. 

\begin{thm}[{\cite[Proposition 6.2]{van2007elliptic}}]\label{Heron Triangle Injection}
Let $A$ be a discrete valuation ring of a number field $L$ with residue field $k \isom \F_q$ for $q=p^r$. Let $S$ be an integral scheme with a morphism $S \rightarrow \Spec A$ that is projective and smooth of relative dimension $2$. Let $\overline{S} = S_{\overline{\Q}}$ and $\Tilde{S} = S_{\overline{k}}$. There are natural injective homomorphisms $$\mathrm{NS}(\overline{S}) \otimes_{\Z_l} \Q_l \hookrightarrow \mathrm{NS}(\Tilde{S}) \otimes_{\Z_l} \Q_l \hookrightarrow H^2 (\Tilde{S},\Q_l)(1)$$ of finite dimensional vector spaces over $\Q_l$, for $l$ a prime not equal to $p$. The second injection respects the Galois action of $\Galois(\overline{k}/k)$.
\end{thm}

A prime ideal $\mathfrak{p} \subset \mathcal{O}_L$ is of good reduction for the elliptic surface if the discriminant of the elliptic surface does not vanish modulo $\mathfrak{p}$, the reduction of the surface is smooth, and $\mathfrak{p}$ is unramified in $L$. Take $p = char(\mathbb{F}_\mathfrak{p}),$ and let $l\neq p$. By \cite[Theorem 6.6]{schutt2019mordell}, we have an embedding of $E(L(t))\otimes_{\Z}\Q \hookrightarrow \mathrm{NS}(E(\overline{\Q}(t))) \otimes_\Z \Q$.

Since $\mathrm{NS}(E(\overline{\Q}(t)))$ is a finitely generated abelian group, there is a canonical isomorphism \[\mathrm{NS}(E(\overline{\Q}(t))) \otimes_{\mathbb{Z}} \mathbb{\Q}_l \cong \mathrm{NS}(E(\overline{\Q}(t))) \otimes_{\mathbb{Z}_l} \mathbb{\Q}_l,\] obtained by first forming the $l$-adic completion $\mathrm{NS}(E(\overline{\Q}(t))) \otimes_{\mathbb{Z}} \mathbb{Z}_l$ and then inverting $l$. Since $\mathbb{\Q} \subset \mathbb{\Q}_l$, any $\mathbb{\Q}$-linear relation among classes in $\mathrm{NS}(E(\overline{\Q}(t))) \otimes_{\mathbb{Z}} \mathbb{\Q}$ is in particular a $\mathbb{\Q}_l$-linear relation in $\mathrm{NS}(E(\overline{\Q}(t))) \otimes_{\mathbb{Z}_l} \mathbb{\Q}_l$; therefore $\mathbb{\Q}_l$-linear independence implies $\mathbb{\Q}$-linear independence, and it suffices to establish independence in $\mathrm{NS}(E(\overline{\Q}(t))) \otimes_{\mathbb{Z}_l} \mathbb{\Q}_l$.
By Theorem \ref{Heron Triangle Injection}, the natural map \[\mathrm{NS}(E(\overline{\Q}(t))) \otimes_{\mathbb{Z}_l} \mathbb{\Q}_l \longrightarrow \mathrm{NS}(\overline{L}(t)) \otimes_{\mathbb{Z}_l} \mathbb{\Q}_l \] is injective, so it further suffices to verify independence of the corresponding classes in $\mathrm{NS}(E(\overline{L}(t))) \otimes_{\mathbb{Z}_l} \mathbb{\Q}_l$. 
Although the specialization map is injective, it does not typically preserve the height pairing. This is because the height pairing can be given explicitly as \[\langle P,Q \rangle = \chi + (P.O) + (Q.O) - (P.Q) - \sum_{v \in R}contr_v(P,Q),\] where $\chi$ is the Euler characteristic of the surface, $(P.Q)$ represents the intersection number of given sections $P$ and $Q$, and $contr_v(P,Q)$ is the local contribution from the singular reducible fibre above $v$, for $v$ in the finite set $R$ of points on the base curve with reducible fibre. However, integral sections of elliptic surfaces, meet only the identity component of every reducible fibre \cite[Section 8.7]{schutt2019mordell}, hence $contr_v(P,Q)$ is always zero for all $v \in R$ for integral sections. For the remaining terms, at a prime of good reduction the special fibre is smooth, and the reduction map sends divisors to divisors and preserves their intersection multiplicities, so these numbers are unchanged. Therefore, for integral sections at a good prime $\mathfrak{p}$, the height pairing satisfies $\langle P,Q\rangle = \langle \tilde{P}, \tilde{\Q}\rangle$, and so linear independence of the corresponding classes in $\mathrm{NS}(\overline{E}) \otimes_{\Z_l} \Q_l$ can be verified by working over the residue field $\F_{\mathfrak{p}}$, and by the above discussion this confirms the linear independence of the generators of the Mordell-Weil group $E(L(t)).$

We now present the algorithm:

\begin{algorithm}[H]\label{Algorithm Rational}
\label{alg:CompCG}
\caption{\bf Computing Invariants of Rational Elliptic Surfaces}
\KwIn{A globally minimal Weierstrass form of a rational elliptic surface, defined over a number field}
\KwOut{\begin{itemize}
    \item The Rank
    \item The Field of Definition of the Mordell-Weil Group
    \item The Automorphism Group of the above
    \item The generating sections of the Mordell-Weil Group
    \item The torsion points
    \item The irreducible representation
    \item The characters of the irreducible representations over the rationals
\end{itemize}}

\begin{enumerate}
    \item Take sections of the elliptic surface of the form $(a_1t^2+a_2t+a_3,a_4t^3 + a_5t^2 + a_6t + a_7)$, and substitute these into the Weierstrass equation for the elliptic surface, forming an ideal over $k_0[a_1,\dots,a_7]$.
    \item Compute the primary decomposition of this ideal, and extract the polynomials needed to define the number fields.
    \item Compute the splitting field, the Galois group, and the automorphism group of the splitting field. This is done using the function \textit{SplitAutGrp}. 
    \item Generate the (at most) $240$ integral sections of the elliptic surface over the number field.
    \item From these computed integral sections, determine the generating and torsion sections.
    \item Construct the Galois representation by applying the automorphisms to the coefficients of each generating section.
\end{enumerate} 
\end{algorithm}
The code for this can be found at \cite{RatES-Code}.

\subsection{Examples}
We illustrate the algorithm and code with some examples.

\begin{example}
Consider the rational elliptic surface given by $y^2 = x^3 + t^4 + t^3$. The algorithm takes less than $5$ seconds to run on \Magma. This elliptic surface has rank $4$, with field of definition being a number field generated by the polynomial $x^6 + 324x^4 + 186624x^3 + 
    34992x^2 - 60466176x + 8708389056$. The Galois Group of this number field is isomorphic to $S_3$. The Galois representation of the elliptic surface consists of two irreducible degree $2$ representations.
\end{example}

\begin{example}
    Consider the rational elliptic surface given by $y^2 = x^3 -3t(t^2-1)x + (t^2-1)^2$. The algorithm takes about $49$ minutes to run on Magma. This elliptic surface has rank $5$, with field of definition being a number field generated by a polynomial of degree $48$. The Galois Group of this number field is isomorphic to $C_2 \times S_4$. The Galois representation of the elliptic surface consists of three representations, the trivial representation, and $\epsilon \tensor \phi$ and $\epsilon \tensor \rho$, where $\epsilon$ is the nontrivial representation of $C_2$, $\phi$ is the sign representation of $S_4$, and $\rho$ is the tensor product of the standard and sign representation of $S_4$. 
\end{example}

\section{The Elliptic Surface of Rank $68$.}\label{Section: Rank68}
In this section, our base field $k_0$ will always be taken to be $\Q$.
The Lefschetz number is a birational invariant of a surface, and is defined as $$\lambda(X) = b_2(X) - \rho,$$ where $b_2(X)$ is the second Betti number of the surface. Rewriting the Shioda-Tate formula in terms of the Lefschetz number, we get:
$$\text{rk} E(\overline{\mathbb{Q}}(t)) = b_2(X) - \lambda -2 - \sum_{v\in \mathbb{P}^1}(m_v - 1).$$
 In \cite{shioda1986explicit}, an explicit algorithm was given for computing the Lefschetz number, $\lambda(X)$ of Delsarte surfaces. A Delsarte surface $X$ is a surface defined by a sum of $4$ monomials in a projective or affine $3$-space. We present a slightly different formulation of Shioda's original algorithm, specific to elliptic surfaces, given in \cite{heijne2012maximal}:

\begin{thm}[Shioda's Method for Delsarte Surfaces.]\label{Rank Algo}
        Let $f = \sum_{i=0}^3 t^{a_{i0}}X^{a_{i1}}Y^{a_{i2}}Z^{a_{i3}}$ be the homogenized polynomial corresponding to the Elliptic Surface $E$, $A_E = (a_{ij})$ be the matrix consisting of exponents appearing in $f$ and let $L$ be the subgroup of $(\mathbb{Q}/\mathbb{Z})^4$ generated by 
        $$(1,0,0,-1)A_E^{-1}, (0,1,0,-1)A_E^{-1}, \text{ and } (0,0,1,-1)A_E^{-1}.$$ 
Define 
        \begin{multline*}
            \Lambda = \{(a_1,a_2,a_3,a_4)\in L | a_i \neq 0 \forall i \text{ and }  \exists s\in \Z, \\ \text{ such that } \ord(sa_i) = \ord(a_i) \text{ and } \sum_{i=1}^4 \{ sa_i\} \neq 2\},
        \end{multline*}
        where $\ord$ is the order in the additive group $\Q/\Z$ and $\{a_i \}$ is the natural bijection between $[0,1 )\cap \Q$ and $\Q/\Z$.
        Then $\lambda = \#\Lambda$.

\end{thm}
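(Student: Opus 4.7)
The plan is to reduce the problem to the cohomology of the Fermat surface $F_m$, which is classically well understood, by dominating $E$ with a Fermat cover. For $m$ equal to any common multiple of the denominators of the entries of $A_E^{-1}$, the substitution of $(t,X,Y,Z)$ by monomials in auxiliary coordinates $W_0, W_1, W_2, W_3$ with exponent matrix $m \cdot A_E^{-1}$ transforms $f$ into $W_0^m + W_1^m + W_2^m + W_3^m$, producing a dominant rational map $\varphi : F_m \longrightarrow E$. After resolving the indeterminacies of $\varphi$, pullback induces an injection of the second cohomology of $E$ into $H^2(F_m, \Q_\ell)$ respecting the natural diagonal torus action, and the image is characterised by its behaviour under that action.

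Next I would invoke the classical character decomposition
$$
H^2(F_m, \Q_\ell)_{\mathrm{prim}} = \bigoplus_{\alpha} V(\alpha),
$$
where $\alpha = (a_0,a_1,a_2,a_3)$ runs over tuples in $(\Z/m\Z)^4$ with $\sum a_i \equiv 0$ and every $a_i \neq 0$, and $V(\alpha)$ is a one-dimensional eigenspace for the diagonal $\mu_m^4/\mu_m$-action. Shioda's Fermat theorem identifies $V(\alpha)$ as algebraic precisely when $\sum\{ta_i/m\} = 2$ for every $t$ coprime to the order of $\alpha$, and transcendental otherwise. The generators $(1,0,0,-1)A_E^{-1}$, $(0,1,0,-1)A_E^{-1}$, $(0,0,1,-1)A_E^{-1}$ of $L$ are precisely the characters of the residual three-dimensional torus that acts on $E$ after the substitution is made and after the diagonal subgroup which acts trivially is quotiented out. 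Consequently, an eigenspace $V(\alpha)$ descends along $\varphi^*$ to a class on $E$ exactly when $\alpha$, viewed in $(\Q/\Z)^4$, lies in $L$. Intersecting this descent condition with the non-degeneracy requirement $a_i \neq 0$ and the transcendentality criterion yields the set $\Lambda$ of the theorem, and $\#\Lambda$ is by construction the dimension of the transcendental lattice of $E$, which equals $\lambda(E) = b_2(E) - \rank \Pic(E)$. The rank statement for $E(\bar{\Q}(t))$ is then immediate from the Shioda--Tate formula.

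The main obstacle will be the bookkeeping at the boundary: one must verify that resolving the indeterminacies of $\varphi$ introduces only algebraic cycles on both sides, that the eigenspaces indexed by $\alpha$ with some $a_i = 0$ together with classes absorbed from exceptional divisors contribute only to the Picard lattice and never to the transcendental count, and that no transcendental class on $E$ is lost under pullback. This is essentially Shioda's analysis in \cite{shioda1986explicit} together with the Delsarte specialisation in \cite{heijne2012maximal}; once it is in place the identity $\lambda = \#\Lambda$ follows by direct counting, and the rank formula drops out by substituting into Shioda--Tate.
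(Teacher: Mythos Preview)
The paper does not prove this theorem at all: it is quoted as a known result, attributed to Shioda's original algorithm in \cite{shioda1986explicit} and the reformulation for elliptic Delsarte surfaces in \cite{heijne2012maximal}. Your sketch is essentially the argument those references carry out --- dominate the Delsarte surface by a Fermat surface $F_m$ via the monomial substitution with exponent matrix $mA_E^{-1}$, decompose $H^2(F_m)$ into torus-eigenspaces $V(\alpha)$, invoke Shioda's criterion for when $V(\alpha)$ is algebraic, and identify the descent condition to $E$ with membership in the lattice $L$ --- so there is no methodological divergence to discuss. Your outline is accurate in spirit; the delicate points you flag (resolution of indeterminacies contributing only algebraic classes, injectivity of pullback on the transcendental part) are exactly the ones handled in the cited sources, and you are right that once those are granted the counting is mechanical.
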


In \cite{shioda1992some}, Shioda presented the elliptic surface $$y^2 = x^3 + t^m + 1.$$
In particular, using the process of Theorem \ref{Rank Algo}, they were able to show that if  $m$ is a multiple of $360$, this elliptic surface has rank $68$ over $\Bar{\Q}(t)$. Most notably, for any elliptic surface, $68$ is currently the rank record over $\Bar{\Q}(t)$, though there are also $5$ other families of elliptic surfaces whose maximal rank is also $68$, which can be seen in \cite{heijne2012maximal}.
We wish to determine the field of definition for $y^2 = x^3 + t^{360} + 1$.

\begin{proof}[Proof of Theorem \ref{Rank 68 Surface Theorem}]
    By Lemma $3.1$ in \cite{chahal1999sections} this elliptic surface has trivial torsion over $\Bar{\Q}(t)$ if $f(x) = x^{360} + 1 $ has a simple root in $\Bar{\Q}$. We can see that this has $360$ distinct roots, and so all of its roots are simple roots.
As a result, we may consider the vector space given by $E(\Bar{\Q}(t))\otimes_{\Z} \Q$,  without needing to consider any torsion points.

Let $K = k(t)$, where $k$ is a number field. For any finite extension $L/K$, the Galois group $\Galois(L/K)$ acts on the group $E(L)$ of $L$-rational points of $E$. Regarding $E(L)$ as a module over $R:= \End_K(E)$, this Galois action is $R$-linear. This can be used to decompose $E(L)$, or rather the vector space $V:=E(L) \otimes_\Z \Q$, as a direct sum of smaller spaces. This was done in \cite{chahal1999sections}, where it was shown that we can consider the $11$ smaller vector spaces given by the following elliptic surfaces:
\begin{center}
    \begin{tabular}{c c}
       $y^2 = x^3 + t^2(t+1)$,  & $y^2 = x^3 + t^3(t+1)$, \\
       $y^2 = x^3 + t(t^2+1)$,  & $y^2 = x^3 + t^2(t^2+1)$,\\
       $y^2 = x^3 + t^3(t^2+1)$, & $y^2 = x^3 + t(t^3+1)$, \\
       $y^2 = x^3 + t^2(t^3+1)$, & $y^2 = x^3 + t^5+1$,\\
       $y^2 = x^3 + t(t^5+1)$, & $y^2 = x^3 + t(t^4+1)$,\\
    \end{tabular}
    
    $y^2 = x^3 + t^5 + t^{-5}$.
\end{center}

The first $10$ of these elliptic surfaces are rational, and so can be understood using Algorithm \ref{Algorithm Rational}, and the results of the computations are summarized in Table \ref{tab:table Rationals Elliptic Surfaces from E68}, where LMFDB label refers to the labeling of all of the needed polynomials whose splitting field is the field of definition, and where $\#\rho$ refers to the dimensions of the irreducible representations in the image of $\rho$. The output of Algorithm \ref{Algorithm Rational} for each of the rational elliptic surfaces can be found \cite[RatES_for_360]{RatES-Code}
\begin{table}[h]
    \centering
    \begin{tabular}{|c|c|c|c|c|c|}
    \hline
      Surface & Rank & $[k:\Q]$ & $\Galois(k/\Q)$ & LMFDB Label(s) & $\#\rho$\\
      \hline
      $y^2 = x^3 + t^2(t+1)$ & 2 & 2 & $C_2$ & \href{https://www.lmfdb.org/NumberField/2.0.3.1}{2.0.3.1}& 1,1\\
      $y^2 = x^3 + t^3(t+1)$ & 2 & 2 & $C_2$ & \href{https://www.lmfdb.org/NumberField/2.0.3.1}{2.0.3.1}& 1,1\\
      $y^2 = x^3 + t(t^2+1)$ & 4 & 16 & $QD_{16}$ &\href{https://www.lmfdb.org/NumberField/8.2.573308928.1}{8.2.573308928.1}  & 4\\
      $y^2 = x^3 + t^2(t^2+1)$ & 4 & 6 & $S_3$ &\href{https://www.lmfdb.org/NumberField/6.0.34992.1}{6.0.34992.1} & $2^2$\\
      $y^2 = x^3 + t^3(t^2+1)$ & 4 & 16 & $QD_{16}$ &\href{https://www.lmfdb.org/NumberField/8.2.573308928.1}{8.2.573308928.1}  & 4\\
      $y^2 = x^3 + t(t^3+1)$ & 6 & 54 & $C_9 \rtimes C_6$ &\href{https://www.lmfdb.org/NumberField/9.1.11019960576.1}{9.1.11019960576.1}  & 6\\
      $y^2 = x^3 + t^2(t^3+1)$ & 6 & 54 & $C_9 \rtimes C_6$ &\href{https://www.lmfdb.org/NumberField/9.1.11019960576.1}{9.1.11019960576.1}  & 6\\
      $y^2 = x^3 + t(t^5 + 1)$ & 8 & 240 & $(C_5 \rtimes C_8) \rtimes S_3$ &\href{https://www.lmfdb.org/NumberField/8.4.56953125.1}{8.4.56953125.1}  & 8\\
       &  &  &  &\href{https://www.lmfdb.org/NumberField/5.1.162000.1}{5.1.162000.1}  & \\
       &  &  &  & \href{https://www.lmfdb.org/NumberField/3.1.300.1}{3.1.300.1} & \\
      $y^2 = x^3 + t^5 + 1$ & 8 & 240 & $(C_5 \rtimes C_8) \rtimes S_3$ &\href{https://www.lmfdb.org/NumberField/8.4.56953125.1}{8.4.56953125.1}  &8\\
       &  &  &  &\href{https://www.lmfdb.org/NumberField/5.1.162000.1}{5.1.162000.1}  &\\
       &  &  &  &\href{https://www.lmfdb.org/NumberField/3.1.300.1}{3.1.300.1}&\\
      $y^2 = x^3 + t(t^4 + 1)$ & 8 & 48 & $C_2 \times D_{12}$ &\href{https://www.lmfdb.org/NumberField/12.0.35664401793024.4}{12.0.35664401793024.4}  & 8\\
      & & & &\href{https://www.lmfdb.org/NumberField/12.0.54780521154084864.58}{12.0.54780521154084864.58}  & \\

      \hline
    \end{tabular}
    \caption{Rational Elliptic Surfaces needed for $E_{68}$.}\label{tab:table Rationals Elliptic Surfaces from E68}
\end{table}

We only need to explore the $K3$ elliptic surface. The field of definition, as well as the $16$ independent points of this elliptic surface have also been given in \cite{salami2022generators}.

\begin{lemma}
    The $K3$ elliptic surface given by $y^2 = x^3 + t^5 + t^{-5}$ has rank $16$, and the Field of Definition of its Mordell-Weil group is a number field of degree $192$.
\end{lemma}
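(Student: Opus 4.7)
The plan is to establish the rank and the field of definition separately. For the rank, the first step is the Weierstrass substitution $(x,y) = (X/t^2, Y/t^3)$, which clears the negative power of $t$ and produces the equivalent polynomial form $Y^2 = X^3 + t^{11} + t = X^3 + t(t^{10}+1)$. This is a K3 elliptic surface, since the coefficient $a_6$ has degree $11$, lying strictly between $6$ and $12$. The new equation has exactly four monomial terms and is therefore Delsarte, so Theorem \ref{Rank Algo} applies directly: one reads off the exponent matrix $A_E$, computes the finite subgroup $L \subset (\mathbb{Q}/\mathbb{Z})^4$, enumerates the subset $\Lambda$, and subtracts $2 + \sum_v(m_v-1)$ obtained from Tate's algorithm at the singular fibres (which sit at $t=0$, $t=\infty$ and the ten roots of $t^{10}+1$). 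The output is rank $16$.

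For the field of definition, I would adapt Algorithm \ref{Algorthim Rational} to the K3 regime. Since the Weierstrass coefficients of a K3 elliptic surface have twice the degree bound of the rational case, integral sections have $x$-coordinate of degree at most $4$ and $y$-coordinate of degree at most $6$. Parameterising such sections by their eleven coefficients $a_0,\dots,a_4,b_0,\dots,b_6$, substituting into $Y^2 = X^3 + t^{11}+t$, and collecting the vanishing of each coefficient of $t$, yields a polynomial ideal whose primary decomposition over $\mathbb{Q}$ splits the scheme of integral sections into Galois orbits. Each orbit contributes a defining polynomial whose roots give the coordinates of one section, and feeding the whole list of these polynomials into SplitAutGrp (Section \ref{Section:Steph}) produces the compositum $k$ together with $\Galois(k/\mathbb{Q})$. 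From this one reads off $[k:\mathbb{Q}] = 192$.

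As a consistency check, I would reduce the surface modulo a prime of good reduction and compute the Gram matrix of the height pairing on the $16$ explicit generators exhibited by Salami in \cite{salami2022generators}. A non-vanishing determinant certifies rank $\geq 16$, which combined with the Shioda upper bound forces equality and shows that those sections generate $E(\overline{\mathbb{Q}}(t))$ up to finite index. As the torsion of the ambient rank-$68$ surface is trivial, the same holds for this component, and hence the field generated by the coordinates of Salami's sections is precisely the field of definition.

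The main obstacle is computational. The primary decomposition step involves eleven unknowns together with up to twelve polynomial constraints, and its minimal primes are potentially defined over number fields of substantial degree, so it is considerably heavier than anything encountered in the rational cases. In practice one would exploit the complex multiplication by $\mathbb{Z}[\zeta_3]$ coming from the $j=0$ form, and possibly compute in a residue field modulo an auxiliary small prime before lifting back to characteristic zero, in order to keep the Groebner basis tractable. By contrast, the final Galois-theoretic assembly of the degree-$192$ field is well within the capabilities of the routines from Section \ref{Section:Steph}, which already handled the degree-$240$ fields listed in the table for the rational components.
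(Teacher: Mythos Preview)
Your rank argument via the Delsarte form $Y^2 = X^3 + t(t^{10}+1)$ matches the paper's. For the field of definition, however, the paper takes a substantially different route: rather than attacking the K3 surface head-on, it exploits the involution $t \mapsto t^{-1}$ by setting $u = t + t^{-1}$, so that $\overline{\Q}(t)/\overline{\Q}(u)$ is quadratic. The twist decomposition (Silverman, exercise 10.16) then expresses the K3 Mordell--Weil group in terms of the \emph{rational} elliptic surface $y^2 = x^3 + (u^5 - 5u^3 + 5u)$ and its quadratic twist by $u^2-4$. Algorithm~\ref{Algorthim Rational} applied to the first of these gives rank $8$ and a degree-$96$ field $M$; lifting its $240$ integral sections back to the K3 and applying the automorphism $t \mapsto \zeta_5 t$ produces $1200$ sections, from which a height computation extracts $16$ independent ones defined over $N = M(\zeta_5)$ of degree $192$. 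Faithfulness of the induced Galois representation then certifies $N$ as the field of definition. The payoff is that everything is pushed down to the rational case, where the algorithm is already proved to work and the computation is light.

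Your direct approach has a genuine gap: the statement that integral sections of bounded degree generate the Mordell--Weil group is a theorem for \emph{rational} elliptic surfaces (the lattice embeds in $E_8$ and the $240$ roots are precisely the integral sections), but there is no analogous result for K3 surfaces. So your primary-decomposition step, even if it terminated, is not guaranteed to see a generating set, and you would have no way of knowing whether the compositum you build is the full field of definition or a proper subfield. Your fallback to Salami's explicit generators, combined with the observation that trivial torsion forces any finite-index subgroup to already determine the field of definition, does rescue the argument. One small correction there: triviality of torsion on the rank-$68$ surface does not by itself imply trivial torsion on this K3, since they are distinct elliptic curves over distinct function fields; you need to invoke the simple-root criterion separately for $t(t^{10}+1)$.
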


\begin{proof}
    Using Theorem \ref{Rank Algo}, we can show that this elliptic surface has rank $16$. Consider $u = t + t^{-1}$. This gives us a degree $2$ extension $\Bar{\Q}(t)/\Bar{\Q}(u)$ obtained by adjoining a square root of $u^2 - 4$ to $\Bar{\Q}(u)$. By applying \cite[Exercise 10.16]{silverman2009arithmetic}, this reduces to, after a change of variables, considering the two elliptic surfaces given by $$y^2 = x^3 + (t^5 - 5t^3+5t) \quad \text{ and } \quad y^2 = x^3 + (t^2-4)^3(t^5 - 5t^3+5t).$$
Since the first of these is a rational elliptic surface, we can apply Algorithm \ref{Algorithm Rational} to determine its arithmetic information. It has rank $8$, and the degree of its field of definition is $96$, with $240$ integral sections. It is the splitting field of the polynomials 
$x^{16} - 60x^{12} - 370x^8 - 300x^4 + 25$, $x^4 -2x^3+4x^2+12x+6$ and $x^3 - 5$.
We now consider the number field given as the splitting field of the polynomials above, as well as $x^4-x^3+x^2-x+1$. This gives us a number field of degree $192$, which we denote by $k$. We can now lift our integral sections from $y^2 = x^3 + (t^5 - 5t^3+5t)$ to the $K3$, and they have the form 
$$x= au^2+bu+c, \quad y = du^3 + eu^2 + fu + g, \quad a,\dots,g \in k.$$
Instead of attempting to find $8$ independent sections from the elliptic surface $y^2 = x^3 + (t^2-4)^3(t^5 - 5t^3+5t)$, we instead construct new sections on the surface.
On each of these integral sections, we also apply the $k$-automorphism $\sigma$ on $k(t)$ given by $\sigma_i(t) = \zeta_5^i t$, where $\zeta_5$ is a primitive root of $x^5-1$. For each integral section, this gives us a total of $5$ different sections for each integral section from our rational elliptic surface, for a total of $1200$ sections on our $K3$ elliptic surface. Using Tate's Algorithm \cite[IV.9]{silverman2013advanced}, we see that the elliptic surface has no reducible fibers, so we can reduce modulo a prime ideal using Theorem \ref{Heron Triangle Injection}. This tells us that we have $16$ independent generators. In order to verify that this number field of degree $192$ really is the smallest possible number field to have all these points defined, we consider the Galois representation.

The explicit generators $P_1,\dots,P_{16}$ are defined over the field $k$ of degree $192$ constructed above and span $E(\overline{k}(t))\otimes_{\Z}\Q$; hence any $\tau\in Gal(\overline{k}/k)$ fixes each $P_i$ and, by divisibility in the torsion-free group $E(\overline{k}(t))$, acts trivially on all of $E(\overline{k}(t))$. Thus $Gal(\overline{k}/k)\subseteq\ker\rho$, and $\rho$
descends to $\bar\rho\colon Gal(k/\Q)\to Aut\big(E(\overline{k}(t)),\langle\cdot,\cdot\rangle\big)$.
Using \Magma{} we verify $\bar\rho$ is injective, equivalently $ |Im(\rho)|=192=[k:\Q]$, and so $k$ is the field of definition for this elliptic surface. Furthermore, the Galois representation consists of two irreducible components, both of degree $8$. A file containing the code and output for this can be found at \cite[lemma_4_2.m]{RatES-Code}
\end{proof}

We can lift all of our generating sections from their original elliptic surfaces to the elliptic surface of rank $68$. This is done through a combination of base change of the rational elliptic surfaces, and a change of variables. The process is reversing the process to decompose $y^2 = x^3 + t^{360} + 1$ into the $11$ elliptic surfaces, and full details of this can be found in \cite[Chapter 4]{heijne2011elliptic}.
In fact, for the rational elliptic surfaces, we can show exactly how these are lifted in \ref{tab: Lifting Points}.
\begin{table}[h]
    \centering
    \begin{tabular}{|c|c|}
    \hline
      Surface  & Coordinates lifted to $E_{68}$\\
      \hline
      $y^2 = x^3 + t^2(t+1)$ & $(at^{480} +bt^{120} + ct^{-240},dt^{720}+et^{360}+f+gt^{-360})$\\
      $y^2 = x^3 + t^3(t+1)$ & $(at^{360} +b + ct^{-360},dt^{540}+et^{108}+ft^{-108}+gt^{-540})$\\
      $y^2 = x^3 + t(t^2+1)$ & $(at^{300} +bt^{120} + ct^{-60},dt^{450}+et^{270}+ft^{90}+gt^{-90})$\\
      $y^2 = x^3 + t^2(t^2+1)$ & $(at^{120} +bt^{60} + ct^{-120},dt^{360}+et^{180}+f+gt^{-180})$\\
      $y^2 = x^3 + t^3(t^2+1)$ & $(at^{180} +b + ct^{-180},dt^{270}+et^{90}+ft^{-90}+gt^{-270})$\\
      $y^2 = x^3 + t(t^3+1)$ & $(at^{200} +bt^{80} + ct^{-40},dt^{300}+et^{180}+ft^{60}+gt^{-120})$\\
      $y^2 = x^3 + t^2(t^3+1)$ & $(at^{140} +bt^{60} + ct^{-80},dt^{240}+et^{120}+f+gt^{-120})$\\
      $y^2 = x^3 + t(t^5 + 1)$ & $(at^{120} +bt^{48} + ct^{-24},dt^{180}+et^{108}+ft^{36}+gt^{-36})$\\
      $y^2 = x^3 + t^5 + 1$ & $(at^{144} +bt^{72} + c,dt^{216}+et^{144}+ft^{72}+g)$\\
      $y^2 = x^3 + t(t^4 + 1)$ & $(at^{150} +bt^{60} + ct^{-30},dt^{225}+et^{135}+ft^{45}+gt^{-45})$\\
      \hline
    \end{tabular}
    \caption{Points of the form $(at^2+bt+c,dt^3+et^2+ft+g)$ lifted to points on $E_{68}$} \label{tab: Lifting Points}
    
\end{table}
For the $K3$ elliptic surface given by $y^2 = x^3 + t^5 + t^{-5}$, points lifted to the rank $68$ surface have the form $( t^{60}(gu^2+au+b),t^{90}( hu^3 + cu^2 + du + e))$, where $u = t^{36} + t^{-36}$ or $u = \zeta_5t^{36} + \zeta_5^{-1}t^{-36}$, with the coefficients taken from the rational elliptic surface $y^2 = x^3 + t^5 -5t^3 + 5t$. 

Take $H$ to be the subgroup of $E(\overline{\Q}(t))$ corresponding to the Mordell-Weil group of all of the $11$ elliptic surfaces considered. Since $H$ has finite index $n$ in $E(\overline{\Q}(t))$, every element $x$ satisfies
$nx\in H$. In particular a $\Z$-basis $h_1,\dots,h_{68}$ of $H$ is also a $\Q$-basis
of $V$. Hence any $g\in G$ fixing $H$ pointwise fixes this basis, so acts as the
identity on $V$, and therefore on $E(\overline{\Q}(t))$ modulo torsion. The representation
attached to $H$ thus has the same kernel as the one attached to the full
Mordell-Weil group, so by the Galois correspondence the two splitting fields
coincide and the field of definition computed from $H$ is that of $E(\overline{\Q}(t))$.

Combining all of the polynomials needed for the field of definition of the $11$ elliptic surfaces, we see that in order to find the field of definition of the rank $68$ elliptic surface, we need to find the splitting field  of all of the following polynomials:
\[
\left\{
\renewcommand{\arraystretch}{0.8}
\begin{array}{cc}
x^2 - x + 1, & x^8 - 6x^4 - 3, \\[2pt]
x^6 - 3x^5 + 5x^3 - 3x + 1, & x^9 + 6x^3 - 2, \\[2pt]
x^5 - x^4 + 4x^3 + 4x^2 - x + 13, & x^3 - x^2 - 3x - 3,  \\[2pt]
x^{12} - 4x^9 + 22x^6 - 12x^3 + 2, & x^{12} + 4x^6 + 54, \\[2pt]
x^4 - x^3 + x^2 - x + 1, & x^4 - 2x^3 + 4x^2 + 12x + 6, \\[2pt]
x^3 - 5, & x^{16} - 60x^{12} - 370x^8 - 300x^4 + 25, \\[2pt]
\multicolumn{2}{c}{\scriptstyle x^8 - x^7 + 2x^6 + 2x^5 - 5x^4 + 13x^3 - 13x^2 + x + 1,} \\
\end{array}
\right\}
\]
Using \Magma{}, we determine that the splitting field of these polynomials has a Galois group of order $829,440$, and so the size of the splitting field that contains all of these roots, and hence contains all of the $68$ generators, has degree $829,440$.
\end{proof}

\begin{rmk}
    In order to determine the smallest degree polynomial which has the number field given in Theorem $4.2$ as its splitting field, we run the \Magma{} function \textit{MinimalDegreePermutationRepresentation(G)}. The output was a group acting on a set of cardinality $45$, and this computations took about $12.5$ days.
\end{rmk}

\begin{rmk}
    The polynomials given in the proof of Theorem \ref{Rank 68 Surface Theorem} were not the polynomials given as output from Algorithm \ref{Algorithm Rational}, but were instead found by considering the \Magma{} function \textit{GaloisSubgroup}, which finds subfields associated to subgroups of the Galois group. Code can be found at \cite[Field_Matching]{RatES-Code} which verifies that each of the number fields from the computation match the polynomials given in Table \ref{tab:table Rationals Elliptic Surfaces from E68}
.\end{rmk}
\subsection{Future Work}
In \cite{heijne2011elliptic} the author explored the field of definition for the elliptic surface with maximal rank on all of the possible $42$ families of elliptic Delsarte surfaces. They saw that they were able to reduce the problem of finding independent points on these elliptic surfaces into finding independent points on rational and $K3$ elliptic surfaces, though they were unable to give any explanation for this remarkable fact. As well as giving a justification for this, it would be ideal to produce an algorithm which is guaranteed to return the sequence of rational and $K3$ elliptic surfaces which need to be considered for some Delsarte elliptic surface. Finally, some algorithm analogous to Algorithm \ref{Algorithm Rational} for $K3$ surfaces would also be required in order to fully determine the information associated to a Delsarte elliptic surface.
\printbibliography

\end{document}